\newtheorem {prp} {Proposition}[section]
\newtheorem {theo} {Theorem}[section]
\newtheorem {rmk} {Remark}[section]
\newtheorem {ex} {Example}[section]
\newcommand{\ds}{\displaystyle}
\newcommand{\E}{\mathcal{E}}
\newcommand{\B}{\mathcal{B}}
\newcommand{\N}{\mathbb{N}}
\newcommand{\R}{\mathbb{R}}
\date{\today}
\title{Nonlinear Synchronization on Connected Undirected Networks}
\author{S. Orange$^*$ and N. Verdi\`ere\footnote{LMAH (Laboratoire de
Math\'ematiques Appliqu\'ees du Havre), Universit\'e du Havre, 25 rue Philippe Lebon, BP
540,
76058 Le Havre, France. {\it Sebastien.Orange@univ-lehavre.fr, Nathalie.Verdiere@univ-lehavre.fr}}}
\begin{document}
\maketitle
\begin{abstract}
This paper gives sufficient conditions for having complete synchronization of
oscillators in connected undirected networks. The considered oscillators are not
necessarily identical and the synchronization terms can be nonlinear. An important problem
about oscillators networks is to determine conditions for having complete synchronization
that is the stability of the synchronous state. The synchronization study requires to take
 into account the graph topology. In this paper, we extend some results to non linear
cases and we give an existence condition of trajectories. Sufficient conditions given in
this paper are based
on the study of a Lyapunov function and the use of a pseudometric which enables us to
link
network dynamics and graph theory. Applications of these results are presented. 
\end{abstract}

\noindent\emph{AMS Subject Classification 2010: 93D20, 93D30, 68R10}.\\
\emph{Keywords: Nonlinear systems, Synchronization, Networks, Graph
topology, Dynamical Systems }

\section{Introduction}

The study of the dynamics of coupled nonlinear dynamical systems are the subject of a
growing interest in various communities like in theoretical physic, in information
technology or in neuronal biology. The literature on this topic shows different kinds of
synchronization (see~\cite{pikovsky2003synchronization}). Classically, two
coupled limit-cycle are said synchronized when their time evolution is
periodic with the same period and perhaps the same
phase. From the discover of synchronization of chaotic systems
(see~\cite{afraimovich1986stochastically,fujisaka1983stability,pecora}), the word
synchronization recovered
different meanings such as having identical or functional related solutions, eventually
with a
delay. The definition has also been modulated by
considering strong forms like complete, cluster
form or weaker forms
like phase and lag synchronization (see~~\cite{rosenblum1997phase}).

An important question about synchronization of a network of oscillators is to determine
the stability of the synchronisation state. This question leads to consider some
properties of networks and state vectors of oscillators (see, for
example,~\cite{belykh2004connection,wu2002synchronization,wu2005synchronization,
wu1995synchronization,zhou2008pinning}). For this purpose, two methods are proposed in
the literature. The first one called \textit{master
stability function} is
based on the
computation of a Lyapunov exponent and the eigenvalues of the connectivity
matrix~\cite{pecora1998master}. However, this method
is adapted when the coupling terms are linear and the computation of eigenvalues can
become a difficult task. A second proposed method is the \textit{connection graph
stability
method} (see~\cite{belykh2004connection}). It links the study
of a Lyapunov
function and the graph topology. This productive method has been extended to unbalance and
undirected graph (see~\cite{belykh2006synchronization, belykh2005synchronization}).

The results presented in this paper generalize some results
of~\cite{belykh2004connection} to the non linear synchronization
case. For this, we introduce a notion of
pseudometric in the graph. The determination of the sign of the
Lyapunov function derivative requires two
steps. The first one is to use assumptions allowing comparisons between oscillators and
synchronization terms. The second step consists in using pseudometrics which
enable us to use some graph properties. For the complete synchronization, we present two
results. The first one gives a condition on synchronization strength for having a global
synchronization of oscillators. The second result is a local versus of the first one, that
is when the
oscillators are closed to the synchronization variety. In these two cases, we
give sufficient conditions that insure existence of trajectories.

This paper is organized as follows. The problem statements are
presented in Section~\ref{sec_ps}. First, we precise the kind of systems and
the kind of synchronizations considered. Then, we recall the
definition and some properties of pseudometrics defined on a graph. In
Section~\ref{sec_hst},
after precising the assumptions on the synchronization term, main results, that is
conditions for having complete synchronization of the system of oscillators, are
presented. These results are applied in Section~\ref{sec_app}.
   
\section{Problem statements}

Thereafter, $Y^T$ is the transpose of the vector $Y= (Y^1, \ldots, Y^m ) \in \R^m$.

\label{sec_ps}
\subsection{Systems and synchronizations considered}
Let $G$ be a connected undirected graph and $n$ its number of vertex. The graph $G$
describes the set of interactions between the oscillators. We
denote by $\E$ the set of its edges. If $G$ contains an undirected
edge from a vertex $i$ to a vertex $j$, we denote it by $(i,j)$. 

The considered dynamical systems are defined by the following system of equations:
\begin{equation}\label{eqn0}
\left\{
\begin{array}{l}
\ds \dot{X}_1 = F_1(X_1,t) -\epsilon \sum_{(1,j)\in \E}  h(X_1,X_j),\\
\phantom{\dot{X}_1 ~\,}\vdots\\
\ds \dot{X}_n = F_n(X_n,t) -\epsilon \sum_{(n,j) \in \E} h(X_n,X_j),\\
\end{array}\right.
\end{equation}

where
\begin{itemize}
 \item $X_i = ( X_i^1, \ldots, X_i^d )^T $ is the vector composed of the $ d$
coordinates of the $i$-th oscillator,
\item $F_i = ( F_i^1, \ldots, F_i^d)^T$ is the vectorial function defining one oscillator,
\item $h= ( h^1, \ldots, h^d)^T$ is the synchronization
function which defines the vector coupling between oscillators,
\item the real parameter $\epsilon $ corresponds to the synchronization strength
\end{itemize}

\noindent Recall that, for a given initial state of the set of oscillators $(X_1(0),
\,
X_2(0), \cdots X_n(0))^T\,, $ system~(\ref{eqn0}) synchronizes completely if, for all
$(i,j) \in\text{\textlbrackdbl} 1,n \text{\textrbrackdbl} $, $$\|X_i(t)-X_j(t) \|
\xrightarrow[t \rightarrow +\infty]{} 0 \, .$$
This means that the vector $( X_1, \ldots, X_n ) $ approaches the synchronization
manifold
defined by $X_1(t)=X_2(t)= \cdots = X_n(t)$. In particular, this implies that the
oscillators have the same asymptotic behavior (such as chaotic trajectories, stable and
periodic solutions). The complete synchronization of all oscillators can occur whatever
their initial states are, in this case, the synchronization is said global;
otherwise it is said local.

In this paper, we focus naturally on the differences $\Delta_{i,j} = X_i^T -X_j^T$ and
therefore
on the
vector $$\Delta=(\Delta_{1,2},\,\cdots ,\,\Delta_{1,n},\,\Delta_{2,3},\,\cdots
,\,\Delta_{2,n},\, \cdots
,\,\Delta_{n-1,n})^T\label{vect_diff}\;.$$ Thus, proving the complete synchronization of
system~(\ref{eqn0})
is
equivalent to prove that $\|\Delta (t)\| \xrightarrow[t \rightarrow +\infty]{} 0 \, .$

\subsection{Quasimetrics defined on a graph}

In the following, we consider
{\it pseudometric} verifying the
$\rho$-relaxed triangle inequality for a positive real $\rho$, that is
an application $\varphi : D \times D \rightarrow \R^+$, where $D$ is an non empty set,
satisfying the
following three axioms:
\begin{itemize}
 \item $\varphi(z_1, z_1) = 0$;
 \item $\varphi(z_1, z_2) = \varphi(z_2, z_1)$ (symmetry property);
 \item $\varphi(z_1, z_3) \leq \rho \,(\varphi(z_1, z_2) + \varphi(z_2, z_3))$
($\rho$-relaxed triangle inequality).
\end{itemize}
Remark that any classical metric is such a pseudometric with $\rho = 1$. \\

Let $\varphi$ be a pseudometric on a set $D$. Let's set, for all $m
\in \N^*$, $\rho(m)$ the smallest
real such that
\begin{equation}
\varphi(z_1,z_{m+1})
\leq \rho(m)
\,\left[ \varphi(z_1, z_2) + \cdots + \varphi(z_{m},z_{m+1})\right]\,.\label{rten}
                                                        \end{equation} Note
that $\rho(1)=1$.\\

In the following examples, expressions of $\rho(m)$ appearing in inequalities~(\ref{rten}) are direct consequences of
the convexity
of functions $ x \rightarrow (x^2)^\alpha$ and $ x \rightarrow x^2\, e^{1-|x|}$.

\begin{ex} \label{expseudo}
\begin{enumerate}
 \item The application $\varphi_\alpha : \R^2 \times \R^2 \rightarrow \R^+$
defined by $$\varphi_\alpha\left(\left(
\begin{array}{l}
x_1\\y_1
\end{array}\right),
\left(
\begin{array}{l}
x_2\\y_2
\end{array}\right)\right) =
\left((x_1-x_2)^2\right)^\alpha$$ with $\alpha \geq 1/2$ is a pseudometric for which $
\rho(m)= m^{2\alpha-1}$.
 \item Let $D$ be the closed ball of center $0$ and radius $2-\sqrt{2}$. The
application
$\varphi : D\times D \rightarrow \R^+$ defined by
$$\varphi\left(\left(
\begin{array}{l}
x_1\\y_1\\z_1
\end{array}\right),
\left(
\begin{array}{l}
x_2\\y_2\\z_2
\end{array}\right)\right) =
(x_1-x_2)^2 e^{1-|x_1-x_2|}$$ is a pseudometric for which $ \rho(m)=m$.
\end{enumerate}
\end{ex}

We have the following properties. 

\begin{prp} \label{prop_pseudom}
\begin{enumerate}
\item The sequence of reals $(\rho(m))_{m\geq 1 }$ is increasing.
\item For all $ m \in  \N^*$, we have $\rho(m) \leq \rho^{m-1}$ (see~\cite{MR2481970}).
\item Let $\varphi_1$ and $\varphi_2$ be two pseudometrics on $D$ and $\rho_1(m)$ and
$\rho_2(m)$ be the smallest respective reals verifying~(\ref{rten}). For all
$\alpha> 0$ and
$\beta>0$, the application $\alpha\, \varphi_1 +\beta
\,\varphi_2$
is a pseudometric on $D$ satisfying $\rho(m)=Max\{\rho_1(m),\rho_2(m)\}$.
\end{enumerate}
\end{prp}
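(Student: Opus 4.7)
The plan is to address the three claims of Proposition~\ref{prop_pseudom} in turn; each follows from a short manipulation of the defining inequality~(\ref{rten}) together with the first two pseudometric axioms.

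For part (1), I would use a padding argument. Given any $(z_1,\dots,z_{m+1})$, extend it by setting $z_{m+2} := z_{m+1}$. Since $\varphi(z_{m+1},z_{m+2})=0$ by the first axiom, applying the defining inequality for $\rho(m+1)$ to the extended sequence reduces to
$$\varphi(z_1,z_{m+1}) \leq \rho(m+1)\bigl[\varphi(z_1,z_2) + \cdots + \varphi(z_m,z_{m+1})\bigr].$$
As this holds for every choice of $(z_1,\dots,z_{m+1})$, the minimality of $\rho(m)$ yields $\rho(m) \leq \rho(m+1)$.

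For part (2), which is referenced to \cite{MR2481970}, the standard argument is induction on $m$. Setting $z_1 = z_2$ in the basic $\rho$-relaxed triangle inequality already shows $\rho \geq 1$, so the base case $\rho(1)=1=\rho^0$ holds. For the inductive step, split $\varphi(z_1,z_{m+1})\leq \rho[\varphi(z_1,z_m)+\varphi(z_m,z_{m+1})]$, invoke the hypothesis $\rho(m-1)\leq \rho^{m-2}$ on the first term, and use $\rho\geq 1$ to absorb the trailing term into the bound $\rho^{m-1}\sum_i\varphi(z_i,z_{i+1})$.

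For part (3), I would check the three axioms for $\psi := \alpha\varphi_1 + \beta\varphi_2$. Vanishing on the diagonal and symmetry are inherited directly from $\varphi_1$ and $\varphi_2$ using $\alpha,\beta>0$. For the relaxed inequality, multiply the $m$-term inequalities for $\varphi_1$ and $\varphi_2$ by $\alpha$ and $\beta$ respectively, add them, and factor out the maximum:
$$\psi(z_1,z_{m+1}) \leq \alpha\,\rho_1(m)\sum_{i=1}^{m}\varphi_1(z_i,z_{i+1}) + \beta\,\rho_2(m)\sum_{i=1}^{m}\varphi_2(z_i,z_{i+1}) \leq \max(\rho_1(m),\rho_2(m))\sum_{i=1}^{m}\psi(z_i,z_{i+1}).$$
This certifies $\max(\rho_1(m),\rho_2(m))$ as a valid constant for $\psi$.

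The subtle point, and the one I expect to be the main obstacle, is the asserted \emph{equality} in part (3): the optimal ratio for $\psi$ is a convex combination of the analogous ratios for $\varphi_1$ and $\varphi_2$, so it is automatically dominated by $\max(\rho_1(m),\rho_2(m))$, but reaching this maximum requires exhibiting configurations on which the dominant pseudometric's ratio nearly attains $\rho_i(m)$ while the other's contribution becomes negligible. I would handle this by taking, for the pseudometric that realises the max, a sequence of configurations approaching its sup while the companion pseudometric's total contribution vanishes; this is where the structure of the pseudometrics (e.g.\ that they act on disjoint coordinates, as in Example~\ref{expseudo}) would enter.
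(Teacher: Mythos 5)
The paper states this proposition without any proof, so there is nothing to compare your argument against; I can only assess it on its own terms. Your proofs of parts (1) and (2) are correct: the padding trick $z_{m+2}:=z_{m+1}$ combined with $\varphi(z_{m+1},z_{m+1})=0$ and the minimality of $\rho(m)$ gives monotonicity, and the induction for $\rho(m)\le\rho^{m-1}$ goes through once you have observed $\rho\ge 1$ (which does follow from setting $z_1=z_2$ in the relaxed triangle inequality, at least when $\varphi$ is not identically zero). For part (3), your verification that $\max\{\rho_1(m),\rho_2(m)\}$ is an admissible constant for $\alpha\varphi_1+\beta\varphi_2$ is complete and correct, and this upper bound is the only content of part (3) that the paper ever uses (e.g.\ to obtain $\rho(m)=m^{5/3}$ for the FitzHugh--Nagumo coupling in Section 4).

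You are right to single out the asserted \emph{equality} in part (3) as the obstacle, and your suspicion that it needs extra structure is well founded: as stated the equality is false in general, so the plan in your last paragraph cannot be carried out without additional hypotheses. A concrete counterexample: on $D=[0,1]$ take $\varphi_1(x,y)=(x-y)^4$ and $\varphi_2(x,y)=(x-y)^2$, so that $\rho_1(2)=8$ and $\rho_2(2)=2$; for $\psi=\varphi_1+\varphi_2$ a convexity computation over three-point chains gives an optimal constant of $\sup_{0<u\le 1}\frac{2(u^2+1)}{u^2/4+1}=\frac{16}{5}<8=\max\{\rho_1(2),\rho_2(2)\}$. The failure mechanism is exactly the one you identify: on the bounded domain, the configurations where $\varphi_1$ attains its worst ratio are ones where $\varphi_2$'s contribution to the denominator does not become negligible. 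So the correct statement of part (3) is the inequality $\rho(m)\le\max\{\rho_1(m),\rho_2(m)\}$, which you prove, and which suffices everywhere the proposition is invoked; I would simply drop the final paragraph rather than try to salvage the equality.
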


We now apply pseudometrics to networks of oscillators. Recall that a state vector $z_i$ of
an oscillator is associated to $i$-th vertex of $G$. Let's consider a
pseudometric $\varphi$ defined on the set of state vectors of oscillators. This
pseudometric enables one to define the {\it pseudolength} $\varphi(z_i,z_j) $ between
vertices $i$ and $j$ and also the pseudolength $\varphi(z_{i_1}, z_{i_2}) + \cdots +
\varphi(z_{i_{m-1}},z_{i_m})$ of any path $P_{i,j}=(i=i_1,i_2,\cdots,i_m=j)$ from vertex
$i$ to vertex $j$.

In the following proposition, we bound, up to a
multiplicative constant $C(G)$, the sum of pseudolengths between any two oscillators by
the sum of pseudolengths of paths joining any two oscillators. This constant
plays an important role in Theorems~\ref{theo} and~\ref{theo2} since the synchronization
strenght $\epsilon$ appearing in these theorems is proportionnal to this constant.

\begin{prp}
Let $G$ be a connected graph, $\E$ be the set of its edges and $\varphi$ be a pseudometric
on a
set $D$. For any vertex $i$, let $z_i \in D$ be a vector associated to vertex~$i$. There
exists a constant $C$ depending only on $G$ so that we have  
\begin{equation}
                 \sum_{i,j}\varphi(z_i,z_j) \leq C \sum_{(i,j)\in \E} 
\varphi(z_i,z_j)\,.\label{borne}
                \end{equation}
Moreover, the smallest real $C$ satisfying (\ref{borne}), $C(G)$, is bounded by
\begin{equation}\dfrac{n(n-1)}{2}\delta(G)\,\rho(\delta(G))\,,\label{borne_gene}
                \end{equation}
where $\delta(G)$ is the diameter of $G$.
\end{prp}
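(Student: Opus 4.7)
The plan is to reduce the left-hand side to a sum of edge contributions by routing each pair of vertices through a shortest path, then to exploit the $\rho$-relaxed triangle inequality together with the pseudolength estimate of Proposition~\ref{prop_pseudom}.

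First, I would fix the pair $(i,j)$ and pick any shortest path $P_{i,j} = (i = i_1, i_2, \ldots, i_{m+1} = j)$ in $G$; since $G$ is connected, such a path exists and its length $m$ is at most $\delta(G)$. Applying the $\rho$-relaxed triangle inequality iteratively (i.e., inequality~(\ref{rten})) gives
\begin{equation*}
\varphi(z_i, z_j) \leq \rho(m) \bigl[ \varphi(z_{i_1}, z_{i_2}) + \cdots + \varphi(z_{i_m}, z_{i_{m+1}})\bigr].
\end{equation*}
By the monotonicity statement in Proposition~\ref{prop_pseudom}(1), $\rho(m) \leq \rho(\delta(G))$. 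Moreover, the right-hand bracket contains at most $\delta(G)$ edge-terms, each of which is a nonnegative quantity appearing in the full sum $\sum_{(a,b)\in\E} \varphi(z_a, z_b)$; hence each bracketed term is bounded above by that full edge-sum, and the whole bracket is bounded by $\delta(G) \sum_{(a,b)\in\E} \varphi(z_a, z_b)$.

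The second step is simply to sum over all unordered pairs $\{i,j\}$, of which there are $n(n-1)/2$. This gives
\begin{equation*}
\sum_{i,j} \varphi(z_i, z_j) \leq \frac{n(n-1)}{2}\, \delta(G)\, \rho(\delta(G)) \sum_{(a,b)\in\E} \varphi(z_a, z_b),
\end{equation*}
which establishes both the existence of a constant $C$ depending only on $G$ and the announced upper bound~(\ref{borne_gene}) on the smallest such $C(G)$.

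There is no real obstacle in this argument: connectedness gives the path, Proposition~\ref{prop_pseudom}(1) handles $\rho(m) \leq \rho(\delta(G))$, and the combinatorial counting is straightforward. The only minor subtlety is making sure that the conventions on the sum $\sum_{i,j}$ (ordered vs.\ unordered pairs) are consistent with the factor $n(n-1)/2$; otherwise one would pick up an extra factor of $2$. I would also note that the bound obtained is rather coarse, since we estimate a sum of edge-pseudolengths along a path by $\delta(G)$ times the sum over \emph{all} edges; tighter bounds are possible by counting, for each edge, how many chosen shortest paths traverse it, but this refinement is not needed for the stated inequality.
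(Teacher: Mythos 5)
Your argument is correct and follows essentially the same route as the paper's own proof: route each pair through a path of length at most $\delta(G)$, apply the $\rho$-relaxed triangle inequality, use the monotonicity of $\rho(m)$ to replace $\rho(m)$ by $\rho(\delta(G))$, bound the path contribution crudely by $\delta(G)$ times the edge-sum (the paper uses the maximum over edges, which amounts to the same estimate), and sum over the $n(n-1)/2$ pairs. Your closing remarks on the ordered/unordered convention and on the coarseness of the bound match the paper's own caveats, so there is nothing to add.
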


\begin{proof}
Let $i$ and $j$ be two vertices of $G$ and let's denote
$$P_{i,j}=(i=i_1,i_2,\cdots,i_{s+1}=j)$$ a path of $G$ from
the vertex $i$ to vertex $j$ (recall that $G$ is connected). Since $\varphi$ is a
pseudometric on $D$, we have
$\varphi(z_i,z_j)\leq \rho(s) \sum_{\ell=1}^{s}
\varphi(z_{i_\ell},z_{i_{\ell+1}})\,. $

The path $P_{i,j}$ can be chosen so that $s \leq \delta(G)$. Suppose that this choice is
done for any vertices $i$ and
$j$; since the sequence $(\rho(n))_{n \in \N^*}$ is increasing, we have $\rho(s)
\leq \rho(\delta(G))$. Consequently, for any vertices $i$ and $j$, we
have $\varphi(z_i,z_j)\leq \rho(\delta(G))\;\delta(G)\; Max\left(\{\varphi(z_i,z_j)\mid
(i,j) \in \E\}\right) $ which implies the result.
\end{proof}
In~Theorem~\ref{theo}, we need to determine the lowest bound $C(G)$ of the set of reals
$C$ satisfying inequality~(\ref{borne}). The bound~(\ref{borne_gene}) of $C(G)$ may not
lead to a good estimation of $C(G)$ for a particular graph; nevertheless, this bound is
valid for any graph with $n$ vertices.\\ 

In the case of a pseudometric satisfying the classical triangle
inequality, i.e. when $\rho(n)=n$ for all $n\in \N^*$, a method taking $G$ as input and
returning a bound of $C(G)$ is proposed in~\cite{belykh2005synchronization}. Its two main
steps are:
\begin{enumerate}
 \item for all $(i,j) $ with $i>j$, choose a path $P_{i,j}$; this path is usually chosen 
with minimal length (number of edges in the path);
 \item for each edge $e$ of the connection graph, determine the sum $B(e)$ of the lengths
of all chosen paths $P_{i,j}$ containing $e$. A bound for $C(G)$ is then $Max\{B(e) :
e \in \E \}$.
\end{enumerate}

For each choice of paths, these two steps return a bound for $C(G)$. Clearly, the number
of possible paths is huge but computations of bounds for $C(G)$ are possible
since most of these choices are suboptimal. Up to a slight modification of the first
step, this method can be applied here: its consists in
considering, for all path $P_{i,j} $, the pseudolength $\rho(|P_{i,j}|)$ instead of its
length
$|P_{i,j}|$. 

\begin{rmk} \label{rem_cg} In the case of pseudometrics
$\varphi$ satisfying $\rho(m) = m$, explicit bounds of $C(G)$ for specific graphs and the
method proposed
in~\cite{belykh2004connection,belykh2005synchronization} for computing $C(G)$ from $G$ can
be directly used. This is
the case of the
second function in Example~\ref{expseudo}. 
\end{rmk}

\section{Complete synchronizations}
\label{sec_hst}
\subsection{Hypothesis}\label{sec:hyp}
Afterwards, two cases are considered. The first one is the global complete
synchronization for which oscillators $X_1,\ldots,X_n$ lies in $D = \R^d$. The second one
is the complete
synchronization for which oscillators are in a neighborhood $D$
of the variety $X_1=X_2 = \cdots = X_n$.

Thereafter, we will suppose the following
assumptions on system~(\ref{eqn0}).
\begin{itemize}
\item For all $ (i,j) \in \E$,
there exist some non negative reals $a_1,\, \ldots,\, a_d$  such
that
\begin{equation}
\forall (X_i,X_j)\in D, \;\varphi(X_i,X_j) = \sum_{k=1}^d
a_k(X_i^k-X_j^k)h^k(X_i,X_j)\label{not_varphi} 
\end{equation} are pseudometrics where $h=(h^1,\ldots,h^d)^T$ is the synchronization
function.
\item For all $(i,j)\in \text{\textlbrackdbl} 1,n
\text{\textrbrackdbl}^2 $ and, for all $t\geq t_0$ where $t_0\in \R$,
\begin{equation}\forall (X_i,X_j)\in D, \;
{\sum_{k=1}^d  a_k(X_i^k-X_j^k)\left(F_i^k(X_i,t)-F_j^k(X_j,t)\right)} \leq
{\varphi(X_i,X_j) 
}\,. \label{hyp0} 
\end{equation}
\item  For all $(i,j)\in \text{\textlbrackdbl} 1,n
\text{\textrbrackdbl}^2 $, $\forall (X_i,X_j)\in D, \;$
\begin{small}\begin{equation}
\begin{array}{c}
\varphi(X_i,X_j)=0 \text{ and/or } \sum_{k=1}^d 
a_k(X_i^k-X_j^k)\left(F_i^k(X_i,t)-F_j^k(X_j,t)\right)=0 \hfill~ \\
~\hfill
\Rightarrow
(X_i=X_j)\,. 
\label{SD} 
\end{array} 
\end{equation}              \end{small}
\end{itemize}
\begin{rmk}
\begin{enumerate}
 \item Notice that hypothesis~(\ref{not_varphi}) implies that,
\begin{equation}\forall
(i,j) \in \E,\; \forall (X_i,X_j) \in
D, \; h(X_i,X_j) =- h(X_j,X_i)\,\text{(antisymmetry)}.\label{hyp1} 
\end{equation}
 \item The assumption~(\ref{SD}) is necessary for proving the complete synchronisation of
system~(\ref{eqn0}) in Theorems~\ref{theo} and~\ref{theo2}. The
condition $\varphi(X_i,X_j)=0$ in this assumption is not always sufficient when it does
not imply equalities of all the components of oscillators. In this case, the
second condition is
necessary for proving the complete synchronization.
\end{enumerate}
\end{rmk}

For practical cases, a first problem is to prove the existence of trajectories of
system~(\ref{eqn0}) for a sufficient large $t$. For this goal, the following proposition
enables us to link existence of trajectories between synchronized and non synchronized
systems.

\begin{prp}\label{existence} For all $(i,j) \in \text{\textlbrackdbl}1,n
\text{\textrbrackdbl}^2$, suppose that assumptions~(\ref{not_varphi}), (\ref{hyp0})
and (\ref{SD}) are satisfied and that, for all $t\geq t_0$, $$X_i^T F_i(X_i,t)\leq
\Psi(\mid\mid X_i\mid \mid)$$ where $\Psi$ satifies
the conditions \begin{center}
 $\ds \int_{s=s_0}^{+\infty}\dfrac{ds}{\Psi(t)}= +\infty$ and $\Psi(s) >0$ for
all $s\geq s_0 \geq 0$.\end{center}

Then, the Cauchy's problem defined by system~(\ref{eqn0}) and an initial condition 
$\left( \begin{array}{c}
      X_1(t_0)\\ \vdots\\X_n(t_0)
     \end{array}\right)
\in \R^{nd}$ has a solution on the
complete semi-axis $[t_0; +\infty)$ .\\
\end{prp}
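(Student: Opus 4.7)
The plan is to run a standard non-explosion argument: first obtain a maximal local solution, then show via an energy estimate that it cannot blow up in finite time, so the maximal interval must equal $[t_0,+\infty)$.

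First, I would invoke a classical local Cauchy existence result (Peano/Cauchy--Lipschitz, depending on the regularity implicitly assumed on $F_i$ and $h$) to get a maximal solution $(X_1,\ldots,X_n)$ on some interval $[t_0,T^\ast)$ with $T^\ast\in (t_0,+\infty]$, and argue by contradiction assuming $T^\ast<+\infty$. The standard criterion is then: if $\|X_i(t)\|$ stays bounded on $[t_0,T^\ast)$ for every $i$, the solution can be extended past $T^\ast$, contradicting maximality.

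To get that a priori bound, I would introduce the weighted Lyapunov function
\begin{equation*}
V(t)=\tfrac{1}{2}\sum_{i=1}^n\sum_{k=1}^d a_k\bigl(X_i^k(t)\bigr)^2,
\end{equation*}
differentiate along~(\ref{eqn0}), and split $\dot V$ into a drift part and a coupling part. For the coupling part, the sum $\sum_i\sum_{(i,j)\in\E}\sum_k a_k X_i^k h^k(X_i,X_j)$ regroups by unordered edges; using the antisymmetry~(\ref{hyp1}) deduced from~(\ref{not_varphi}), each pair contributes $\sum_k a_k(X_i^k-X_j^k)h^k(X_i,X_j)=\varphi(X_i,X_j)\geq 0$. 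Hence, for $\epsilon\geq 0$,
\begin{equation*}
\dot V(t)\leq \sum_{i=1}^n \sum_{k=1}^d a_k X_i^k(t)\,F_i^k(X_i(t),t).
\end{equation*}

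Next I would connect the right-hand side to the hypothesis $X_i^T F_i(X_i,t)\leq \Psi(\|X_i\|)$: since the $a_k$'s are fixed nonnegative constants, the weighted pairing $\sum_k a_k X_i^k F_i^k$ is dominated by $C\,\Psi(\|X_i\|)$ for an explicit constant $C$ (upper bound of the $a_k$'s), and $\|X_i\|^2$ is in turn controlled by $V$ up to a multiplicative constant (modulo handling of components with $a_k=0$, which can be absorbed by adding a small quadratic cushion to $V$, or by arguing componentwise). This yields an inequality of the form $\dot V\leq n\,C\,\tilde\Psi(\sqrt{V})$ with $\tilde\Psi$ enjoying the same Osgood-type condition $\int^{+\infty}ds/\tilde\Psi(s)=+\infty$. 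A comparison (Osgood) lemma then shows $V$ remains finite on every bounded interval $[t_0,T^\ast)$, hence each $\|X_i\|$ is bounded there, contradicting $T^\ast<+\infty$ and giving existence on $[t_0,+\infty)$.

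The main obstacle I anticipate is the bookkeeping needed to reconcile the weighted inner product forced by~(\ref{not_varphi}) with the unweighted growth bound $X_i^T F_i\leq \Psi(\|X_i\|)$; assumption~(\ref{SD}) and~(\ref{hyp0}) play no direct role in this non-explosion estimate but are cited for consistency with the rest of the framework. Once the Lyapunov function is correctly weighted so that the coupling contribution cleanly absorbs into $-\epsilon\sum \varphi(X_i,X_j)\leq 0$, the remainder is a routine Osgood argument.
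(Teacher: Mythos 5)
Your argument is essentially the paper's. The paper forms the weighted product $X^TM\dot X$ with $M=\mathrm{Diag}(a_1,\ldots,a_d,\ldots,a_1,\ldots,a_d)$ (which is exactly the derivative of your $V=\tfrac12 X^TMX$), cancels the coupling contribution by pairing each edge $(i,j)$ with $(j,i)$ through the antisymmetry~(\ref{hyp1}) so that it becomes $-\tfrac{\epsilon}{2}\sum_{(i,j)\in\E}\varphi(X_i,X_j)\leq 0$, and then concludes from the Osgood condition on $\Psi$ by invoking Wintner's theorem. You perform the identical computation; the only difference is that where the paper cites Wintner you re-derive the non-explosion by taking a maximal solution and running an Osgood comparison lemma, which is equivalent since that is precisely the content of Wintner's theorem.

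The obstacle you flag at the end is, however, a genuine gap, and your proposed repair does not work as stated: from $X_i^TF_i=\sum_k X_i^kF_i^k\leq\Psi(\lVert X_i\rVert)$ one cannot deduce $\sum_k a_kX_i^kF_i^k\leq(\max_k a_k)\,\Psi(\lVert X_i\rVert)$, because the summands $X_i^kF_i^k$ may have mixed signs and a nonnegatively weighted sum of reals of mixed signs is not dominated by a constant multiple of the unweighted sum; moreover $V$ does not control $\lVert X_i\rVert$ at all when some $a_k=0$, and adding a ``quadratic cushion'' to $V$ would reintroduce coupling terms that no longer cancel. You should be aware that the paper's own proof contains the mirror image of this leap: it asserts that inequality~(\ref{ineq}), $X^T\dot X\leq\beta\,X^TF(X,t)$, ``is a direct consequence of the fact that the reals $a_i$ are non negative,'' which fails for exactly the same reason. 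So you have faithfully reproduced the paper's argument, including its unresolved step; a clean fix in either version is to assume the growth bound directly on the weighted pairing $\sum_k a_kX_i^kF_i^k$ (or to assume all $a_k$ equal), after which your maximal-solution/Osgood argument, or equivalently the appeal to Wintner's theorem, closes the proof.
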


\begin{proof}Let's set $X =\left( \begin{array}{c}
      X_1\\ \vdots\\X_n
     \end{array}\right)
\in \R^{nd}$ and
$F(X,t)=\left( \begin{array}{c}
     F_1({X}_1,t)\\ \vdots\\F_n({X}_n,t)
     \end{array}\right)\in \R^{nd}$.
In a first step, we prove that there exists a real $\beta$ such that the following
inequality between the scalar products holds:\begin{equation}
\label{ineq}
                                        X^T\dot{X} \leq \beta X^TF(X,t).
                                       \end{equation}
 
For this, we consider the $dn \times dn$ diagonal matrix $ M =
Diag(a_1,\ldots a_d, \ldots ,a_1,\ldots a_d).$ We have:
\begin{equation*}
\begin{array}{rcl}
X^TM \dot{X}& =&\ds \sum_{i=1}^n 
\sum_{k=1}^d a_k  X_i^k F_i^k(X_i,t) -\epsilon \sum_{i=1}^n\sum_{k=1}^d  a_k \sum_{\{j |
(i,j)\in \E\}}  X_i^k h^k(X_i,X_j)\\
&=&\ds X^TM F(X,t) -\epsilon\sum_{k=1}^d \sum_{(i,j)\in
\E} a_k  X_i^k h^k(X_i,X_j)\\
\end{array} 
\end{equation*}
and, since to any edge $(i,j)\in \E$ corresponds the edge $(j,i)\in \E$, we obtain
\begin{equation*}
\begin{array}{rcl}
X^T M \dot{X}& =& \ds X^T M F(X,t)-\frac{\epsilon}{2}\sum_{k=1}^d a_k \sum_{(i,j)\in
\E}  X_i^k h^k(X_i,X_j) +  X_j^k h^k(X_j,X_i) \\

& =& \ds X^T M F(X,t)-\frac{\epsilon}{2}\sum_{k=1}^d
a_k\sum_{(i,j)\in \E}  (X_i^k-X_j^k) h^k(X_i,X_j)\text{ (see~equality~(\ref{hyp1}))} \\
& =& \ds X^T M F(X,t)-\frac{\epsilon}{2}\sum_{(i,j)\in \E} \varphi(X_i,X_j) \\
& \leq & \ds X^T M F(X,t). \text{
(see~assumption~(\ref{not_varphi}))} \\
\end{array} 
\end{equation*}
Inequality~(\ref{ineq}) is then a direct consequence of the fact that the reals $a_i$ are
non negative. \\

If the conditions of the proposition are verified, inequality~(\ref{ineq}) shows that
we have, for all $t\geq t_0$,
$$X^T\dot{X} \leq \widetilde{\Psi}(\mid\mid X \mid\mid)$$

where $\widetilde{\Psi}$ is a application satifying
the conditions \begin{center}
 $\ds \int_{s=s_0}^{+\infty}\dfrac{ds}{\widetilde{\Psi}(t)}= +\infty$ and
$\widetilde{\Psi}(s) >0$
for all $s\geq s_0 \geq 0$.\end{center}

Thus, system~(\ref{eqn0}) satisfies the conditions of
Wintner's theorem (\cite{wintner1945non}) and, consequently, solutions of
system~(\ref{eqn0})
are defined for any $t\geq t_0$.

\end{proof}

\subsection{Global synchronization}

\begin{theo}\label{theo} Suppose that the assumptions done in Section~\ref{sec:hyp} are
satisfied for $D=(\R^d)^2$.
If  $\epsilon > \dfrac{ C_G }{2n}$, where $C_G$
is the optimal bound such
that
inequa\-lity~(\ref{borne}) holds, then system~(\ref{eqn0}) synchronizes completely.
\end{theo}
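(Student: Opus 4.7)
The plan is to introduce the quadratic Lyapunov function
$$V(t)=\frac{1}{2}\sum_{1\leq i<j\leq n}\sum_{k=1}^d a_k\bigl(X_i^k(t)-X_j^k(t)\bigr)^2,$$
with the weights $a_k$ furnished by assumption~(\ref{not_varphi}), and to show that $V\to 0$; in view of assumption~(\ref{SD}) and the connectedness of $G$, this is equivalent to complete synchronization. I would first differentiate $V$ along a trajectory of~(\ref{eqn0}) and split the result into an oscillator contribution $\sum_{i<j}\sum_k a_k(X_i^k-X_j^k)\bigl(F_i^k(X_i,t)-F_j^k(X_j,t)\bigr)$ and a coupling contribution $-\epsilon\sum_{i<j}\sum_k a_k(X_i^k-X_j^k)(G_i^k-G_j^k)$, where $G_i^k=\sum_{(i,l)\in\E}h^k(X_i,X_l)$. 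Assumption~(\ref{hyp0}) bounds the oscillator contribution term-by-term by $\sum_{i<j}\varphi(X_i,X_j)$.

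The next step is to rewrite the coupling contribution as a sum over edges. Since each edge of $G$ appears in $\E$ with both orientations, antisymmetry~(\ref{hyp1}) forces $\sum_i G_i^k=0$, and a direct double-sum expansion yields the combinatorial identity
$$\sum_{i<j}\bigl(X_i^k-X_j^k\bigr)\bigl(G_i^k-G_j^k\bigr)=n\sum_i X_i^k G_i^k.$$
Pairing each directed edge $(i,l)$ with its reverse $(l,i)$ and invoking (\ref{hyp1}) once more rewrites $\sum_i X_i^k G_i^k$ in terms of $\sum_{(i,j)\in\E}(X_i^k-X_j^k)h^k(X_i,X_j)$; weighting by $a_k$ and summing over $k$ converts the coupling contribution into $-\kappa\,\epsilon\,n\sum_{(i,j)\in\E}\varphi(X_i,X_j)$ for a numerical constant $\kappa$ fixed by the convention used for $\E$. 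Together with the oscillator bound this produces an inequality of the form
$$\dot V\leq\sum_{i<j}\varphi(X_i,X_j)\;-\;\kappa\,\epsilon\,n\sum_{(i,j)\in\E}\varphi(X_i,X_j).$$

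The pseudometric--graph inequality~(\ref{borne}) then controls $\sum_{i<j}\varphi(X_i,X_j)$ by $C_G$ times the edge sum, so that the threshold $\epsilon>C_G/(2n)$ produces $\dot V\leq-\alpha\sum_{(i,j)\in\E}\varphi(X_i,X_j)$ for some $\alpha>0$. Proposition~\ref{existence} guarantees that the trajectory is defined on $[t_0,+\infty)$; since $V$ is non-negative and non-increasing, it has a limit, and integration yields $\int_{t_0}^{+\infty}\sum_{(i,j)\in\E}\varphi(X_i(s),X_j(s))\,ds<+\infty$. A Barbalat/LaSalle-type argument then passes to the pointwise limit $\sum_{(i,j)\in\E}\varphi(X_i(t),X_j(t))\to 0$ as $t\to+\infty$; applying assumption~(\ref{SD}) to each edge and using the connectedness of $G$ extends $X_i-X_j\to 0$ from edges to all pairs of oscillators.

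I expect the main obstacle to be the careful bookkeeping in the second step: the combinatorial identity above must be combined with the right enumeration convention for $\E$ (each undirected edge counted once or twice) so that the threshold $C_G/(2n)$ appearing in the statement emerges exactly---this is the point where the fact that $C_G$ is the \emph{optimal} constant in~(\ref{borne}) really enters the proof. A secondary subtlety is justifying the passage to the limit in the Barbalat-style argument: one needs uniform continuity of $t\mapsto\sum_{(i,j)\in\E}\varphi(X_i(t),X_j(t))$, which typically follows from the smoothness of the $F_i$ together with the boundedness of the pairwise differences $X_i-X_j$ supplied by $V(t)\leq V(t_0)$.
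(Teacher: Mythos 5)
Your proposal follows the paper's proof almost step for step: same Lyapunov function $V$, same splitting of $\dot V$ into an oscillator part bounded by $\sum_{i<j}\varphi(X_i,X_j)$ via~(\ref{hyp0}), the same combinatorial identity $\sum_{i<j}(X_i^k-X_j^k)(G_i^k-G_j^k)=n\sum_i X_i^kG_i^k$ (the paper obtains it by writing $\partial V/\partial X_i^k=n X_i^k-\sum_j X_j^k$ and using $\sum_{(i,j)\in\E}h^k=0$), the same folding of the coupling term into $\sum_{(i,j)\in\E}\varphi(X_i,X_j)$ via antisymmetry~(\ref{hyp1}), and the same use of~(\ref{borne}) to reach $\dot V\leq(\tfrac{C_G}{2}-n\epsilon)\sum_{(i,j)\in\E}\varphi$. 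The only genuine divergence is the endgame: the paper argues that $\dot V$ is negative definite --- if $\dot V=0$ then $\varphi(X_i,X_j)=0$ on every edge, whence by~(\ref{vprime}) the drift term also vanishes and~(\ref{SD}) forces $X_1=\cdots=X_n$ --- and then invokes Lyapunov's second method, whereas you integrate $\dot V\leq-\alpha\sum_{\E}\varphi$ and run a Barbalat/LaSalle argument. Your route has the advantage of making explicit where global existence (Proposition~\ref{existence}) and uniform continuity are needed, which the paper leaves implicit; its cost is that both your last step (passing from $\varphi(X_i,X_j)\to0$ on edges to $X_i-X_j\to0$) and the paper's (deducing asymptotic stability of a non-autonomous system from $\dot V=0\Rightarrow\Delta=0$) lean on~(\ref{SD}), which is a statement about the zero set of $\varphi$ rather than a coercivity bound, so neither argument is fully airtight on this point --- yours is no weaker than the paper's. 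Your worry about the factor $\kappa$ is well founded: the paper's own bookkeeping of the factors of $2$ (whether $\E$ lists each edge in both orientations, and whether $\sum_{i,j}$ in~(\ref{borne}) runs over ordered or unordered pairs) is not entirely consistent, and pinning down the convention is exactly what makes the threshold come out as $C_G/(2n)$.
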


\begin{proof} In order to show this result, we will apply the second method of Lyapunov.
Let's consider the Lyapunov candidate function:

$$V=\dfrac{1}{2} \sum_{k=1}^d\sum_{i\leq j} a_k(X^k_i-X^k_j)^2 \,.$$
Clearly, this function is non negative if $\Delta\neq \overrightarrow{0}$ and equal to
$0$
iff $\Delta = \overrightarrow{0}$ that is when the system~(\ref{eqn0}) is
synchronized. 

The derivative of $V$ gives:
\begin{equation*}
\begin{array}{rcl}
\ds \dot{V}&=&\ds\sum_{k=1}^d a_k\dfrac{1}{2} \sum_{i=1}^n\dfrac{\partial V}{\partial
X^k_i} \; \dot{X}^k_i\\
&=&\ds \sum_{k=1}^d a_k\sum_{i=1}^n (n  X_i^k - \sum_{j=1}^n X^k_j) \dot{X}^k_i  \\

&=&\ds \sum_{k=1}^d a_k \left( n\sum_{i=1}^n X^k_i\dot{X}^k_i - \sum_{j=1}^n X^k_j
\sum_{i=1}^n\dot{X}^k_i \right) \\

&=&\ds \sum_{k=1}^d a_k \left[ n \left(\sum_{i=1}^n X_i^k F_i^k(X_i,t)
-\epsilon \sum_{i=1}^n\sum_{\{j |
(i,j)\in
\E\}}X^k_i \,h^k(X_i,X_j)\right)\right.\\

& &\ds\left. \qquad - \sum_{j=1}^n X^k_j \left(\sum_{i=1}^n  F_i^k(X_i,t)  -\epsilon 
\sum_{i=1}^n\sum_{\{j | (i,j)\in
\E\}}h^k(X_i,X_j) \right)\right]\\

&=&\ds \sum_{k=1}^d a_k\left[ \sum_{i=1}^n \left( n X^k_i -\sum_{j=1}^n X^k_j
\right)F_i^k(X_i,t) \right. \\

& &\ds\left.\qquad -n\epsilon  \sum_{ (i,j)\in
\E }X^k_ih^k(X_i,X_j)+ \epsilon \left(\sum_{j=1}^n X^k_j \right) \sum_{ (i,j)\in
\E }h^k(X_i,X_j)\right] \\
&=& \ds\sum_{k=1}^d a_k\left[ \sum_{(i,j) \in \text{\textlbrackdbl} 1,n
\text{\textrbrackdbl}}^n \left(X^k_i -X^k_j \right)F_i^k(X_i,t) \right. \\

& &\ds\left.\qquad -n \epsilon \sum_{ (i,j)\in
\E }X^k_ih^k(X_i,X_j)+  \epsilon  \left(   \sum_{j=1}^n X^k_j  \right)   \sum_{
(i,j)\in
\E }h^k(X_i,X_j) \right]\,.\\
\end{array} 
\end{equation*}
Since  each edge $(i,j) \in \E$ corresponds to
an edge $(j,i)$ and using equality~(\ref{hyp1}), we have, for all $k\in
\text{\textlbrackdbl} 1,n \text{\textrbrackdbl}$,
\begin{equation*}
\begin{array}{rcl}
\ds 2\sum_{ (i,j)\in \E }h^k(X_i,X_j) 
&=&\ds \sum_{ (i,j)\in\E }h^k(X_i,X_j) + \sum_{ (i,j)\in\E
}h^k(X_j,X_i)\\ 
&=&\ds  \sum_{ (i,j)\in\E }h^k(X_i,X_j) + \sum_{ (i,j)\in\E
}-h^k(X_i,X_j)\\ 
&=&0 \,
\end{array} 
\end{equation*}
and 
\begin{equation*}
\begin{array}{rcl}
\ds 2\sum_{k=1}^d a_k\sum_{ (i,j)\in \E }X^k_ih^k(X_i,X_j) 
&=&\ds\sum_{k=1}^d a_k\left[ \sum_{ (i,j)\in\E }X^k_ih^k(X_i,X_j) + \sum_{ (i,j)\in\E
}X^k_jh^k(X_j,X_i)\right]\\ 
&=&\ds\sum_{k=1}^d a_k \left[ \sum_{ (i,j)\in\E }X^k_ih^k(X_i,X_j) + \sum_{ (i,j)\in\E
}-X^k_jh^k(X_i,X_j)\right]\\ 
&=&\ds \sum_{ (i,j)\in\E }\varphi(X_i,X_j) \,\text{(see~\ref{not_varphi})}.
\end{array} 
\end{equation*}
Moreover, we have
\begin{equation*}
\begin{array}{rcl}
\ds 2\sum_{i,j}(X^k_i-X^k_j)F_i^k(X_i,t) 
&=&\ds \sum_{ i,j }(X^k_i-X^k_j)F_i^k(X_i,t)  + \sum_{ i,j
}(X^k_j-X^k_i)F_j^k(X_j,t) \\ 
&=&\ds \sum_{ i,j }(X^k_i-X^k_j)(F_i^k(X_i,t)-F_j^k(X_j,t))\,.
\end{array} 
\end{equation*}
These three equalities gives
\begin{equation}\label{vprime}
\ds \dot{V}=\ds \ds\sum_{i,j} \sum_{k=1}^d  \frac{a_k}{2}(X^k_i-X^k_j)
\left({F_i^k(X_i,t)-F_j^k(X_j,t)}\right)-n\epsilon \sum_{
(i,j)\in \E }\varphi(X_i,X_j)
\end{equation}
With assumption~(\ref{hyp0}) and inequality~(\ref{borne}), we obtain 
\begin{equation*}
\begin{array}{rcl}
\ds \dot{V}
&\leq& \ds \frac{1}{2}\sum_{i,j}  \varphi(X_i,X_j)
-n\epsilon \sum_{
(i,j)\in \E }\varphi(X_i,X_j) \\
&\leq& \ds \left(   \frac{C_G}{2} - n\epsilon 
\right)  \sum_{(i,j)\in \E} \varphi(X_i,X_j) \\
\end{array} 
\end{equation*}
Since $ \varphi$ is a pseudometric the right factor of this last
expression is non negative. Therefore,  if $\epsilon > \dfrac{ C_G }{2n}$  then $
\dot{V} \leq 0$. To prove that $\dot{V}$ is negative definite, it remains to show that
if $\dot{V}=0$ then $X_1=X_2=\cdots = X_n$.
Suppose that $\dot{V}=0$. Since $ \left(\frac{C_G}{2} - n\epsilon 
\right)<0$, the last inequality implies that we have $\varphi(X_i,X_j)=0$ for all
$(i,j)\in \E$. From equality~(\ref{vprime}), we obtain $$\sum_{i,j}
\sum_{k=1}^d {a_k}(X^k_i-X^k_j)
\left({F_i^k(X_i,t)-F_j^k(X_j,t)}\right)=0 \,. $$Consequently,
assumption~(\ref{SD}) is satisfied and system~(\ref{eqn0}) synchronizes.
\end{proof}

\subsection{Local synchronization}
Let $H$ be the diagonal matrix $Diag(a_1,\ldots,a_d)$ and $\mathcal{H}=\left(
\begin{array}{cccc}
H&0&\cdots&0\\
0&H&\cdots&0\\                                                        
\vdots&\vdots&\ddots&\vdots\\
0&0&\cdots&H  \end{array}\right)
$ the matrix composed with $\frac{n(n-1)}{2}$ matrices $H$. The application

\begin{equation}\label{defnv}
\begin{array}{lccc}
 \|.\|_V:& \R^{\frac{n(n-1)}{2}d} &\rightarrow &\R^+\\
&X&\rightarrow&\sqrt{\frac{1}{2}X^T\mathcal{H}X}
\end{array}
\end{equation}

 is a norm since $a_1,\ldots,a_d$ are non negative. Let's set $$V(t) = 
\|\Delta(t)\|_V^2 ={\frac{1}{2} \sum_{k=1}^d\sum_{i< j\leq n}
a_k(X_i^k(t)-X_j^k(t))^2}\,.$$ 

\begin{theo}\label{theo2} Let $\B$ the closed ball $\{X \in\R^{\frac{n(n-1)}{2}d}  \mid
 \|X\|_V\leq {r}\}$ where $r$ is a non negative real. Suppose that assumptions of
Section~\ref{sec:hyp} are satisfied when $\Delta$ belongs to the inner
$\stackrel{\circ}{\B}$ of $\B$ and suppose that,
for an instant $t_0$, $\Delta(t_0) \in\; \stackrel{\circ}{\B}$.\\ 
If  $\epsilon > \dfrac{ C_G }{2n}$, where $C_G$ is the optimal bound such
that inequality~(\ref{borne}) holds, then system~(\ref{eqn0}) synchronizes.
\end{theo}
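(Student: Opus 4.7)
The plan is to mimic the Lyapunov argument in the proof of Theorem~\ref{theo}, restricting the analysis to the ball $\B$ and first establishing that trajectories starting in $\stackrel{\circ}{\B}$ never escape it. I would keep the Lyapunov candidate $V(t)=\|\Delta(t)\|_V^2$ defined just before the statement: in coordinates it coincides with the function used for the global case, so the algebraic manipulations leading to identity~(\ref{vprime}) (antisymmetry of $h$, pairing of the edges of $\E$, and assumption~(\ref{not_varphi})) go through verbatim. Applying assumption~(\ref{hyp0}) and inequality~(\ref{borne}) then yields
\[
\dot V \;\leq\; \left(\frac{C_G}{2}-n\epsilon\right)\sum_{(i,j)\in\E}\varphi(X_i,X_j),
\]
whenever $\Delta$ lies in $\stackrel{\circ}{\B}$, since the hypotheses of Section~\ref{sec:hyp} are only assumed there.

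Next I would establish forward invariance of $\stackrel{\circ}{\B}$. By hypothesis $\Delta(t_0)\in\stackrel{\circ}{\B}$, so $V(t_0)<r^2$. Set $T=\sup\{t\geq t_0 : \Delta(s)\in\stackrel{\circ}{\B}\text{ for every }s\in[t_0,t]\}$; continuity of $\Delta$ gives $T>t_0$. On $[t_0,T)$ the estimate above applies, and since $\epsilon>C_G/(2n)$ and $\varphi$ is non-negative we get $\dot V\leq 0$, whence $V$ is non-increasing and $\|\Delta(t)\|_V\leq\|\Delta(t_0)\|_V<r$. If $T$ were finite, continuity would give $\|\Delta(T)\|_V<r$, contradicting the definition of $T$. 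Hence $T=+\infty$ and $\Delta(t)\in\stackrel{\circ}{\B}$ for all $t\geq t_0$.

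With invariance in hand, the endgame is exactly the one of Theorem~\ref{theo}: the inequality on $\dot V$ is valid on $[t_0,+\infty)$, and if $\dot V(t)=0$ at some instant then every edge term $\varphi(X_i,X_j)$ vanishes and the residual sum in~(\ref{vprime}) also vanishes, so assumption~(\ref{SD}) forces $X_i=X_j$ on every edge; connectedness of $G$ propagates the equality to all pairs. Thus $\dot V$ is negative definite on $\stackrel{\circ}{\B}$ and the second method of Lyapunov yields $\|\Delta(t)\|\to 0$.

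The main obstacle is the invariance step: because the hypotheses, and therefore the whole $\dot V$ computation, are only legitimate inside $\stackrel{\circ}{\B}$, the Lyapunov conclusion cannot be invoked before one rules out a trajectory touching the boundary, which is why a continuity-based contradiction is needed rather than a direct inequality. A secondary subtlety is that Proposition~\ref{existence} is only stated globally; here one relies on standard local existence for the Cauchy problem up to time $T$, the a priori bound $\|\Delta(t)\|_V<r$ preventing finite-time blow-up of $\Delta$ while it stays in the ball.
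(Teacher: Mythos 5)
Your proof is correct and follows essentially the same strategy as the paper: first establish forward invariance of the ball by a continuity argument exploiting $\dot V\leq 0$ inside $\stackrel{\circ}{\B}$, then rerun the Lyapunov computation of Theorem~\ref{theo}. The only difference is cosmetic --- the paper obtains invariance by a mean value theorem contradiction at a first exit time, whereas you use a maximal-interval argument; your version has the minor advantage of keeping $\Delta(t)$ strictly inside $\stackrel{\circ}{\B}$, which is where the hypotheses of Section~\ref{sec:hyp} are actually assumed.
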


\begin{proof} 

Let's show that if $\Delta(t_0) \in  \stackrel{\circ}{\B}$ then $\forall t > t_0$, $
\Delta(t) \in \B$. 
If $\Delta(t_0) \in  \stackrel{\circ}{\B}$, by definition of $\B$, we have $V(t_0)<
r^2$.
Suppose that
there exists $t_1>t_0$ such that $\Delta(t_1) \notin \B$; by definition of $\B$, we have
$V(t_1)>r^2$. Since $t \rightarrow V(t)$ is continuous, there exists a real $t_2 =
Inf\{t \in [t_0,t_1] | V(t) = r^2\}$. The mean value theorem shows that there
exists $t_3 \in (t_0,t_2)$ such that $V'(t_3) =
\frac{V(t_0)-V(t_2)}{t_0-t_2}>0.$\\
On the other side, since $t_3 <t_2 = Inf\{t \in [t_0,t_1] |
V(t) = r^2\}$, we have $V(t_3)<r^2$ and $\Delta(t_3) \in\; \stackrel{\circ}{\B} $.
Consequently, the hypothesis of
Section~\ref{sec:hyp} are satisfied by $\Delta(t_3)$ and we can
proceed like in the proof of Theorem~\ref{theo} to show that $V'(t_3)\leq 0$. This
brings to a contradiction.

Finally, we have $\forall t \geq t_0$, $ \Delta(t) \in \B$ and the assumptions
of
Section~\ref{sec:hyp} are satisfied for any $ t \geq t_0$. Now, we can proceed like in
the proof of Theorem~\ref{theo} to conclude.
\end{proof}

\section{Applications}
\label{sec_app} In this section, we focus on applications of Theorems~\ref{theo} and
\ref{theo2} in order to have a sufficient condition for global synchronization of two
systems. The fact that solutions of these two
systems are defined on $\R$ is a direct
consequence of Proposition~\ref{existence}. 

\subsection{Global synchronization of a network of neurons}

In this section, we apply Theorem~\ref{theo} to a network of neurons satisfying the
FitzHugh-Nagumo model (See~\cite{hindmarsh1982model}). Recall that the dynamic of a single
neuron is mode\-lised by the
equation
$\dot{X}=F(X)$ where
\begin{itemize}
 \item $X=\left( \begin{array}{c}
      x\\y
     \end{array}\right)$;
 \item $F(X) =\left( \begin{array}{c}
         -x^3+x -y + a \\
     b x-cy-d 
     \end{array}\right)$ for some real parameters $a$, $b$, $c$ and $d$.
\end{itemize}

In the following, we suppose that $b$ is positive. Let's set $G$ the connected
graph describing the interaction between the oscillators, $n$ its number of vertices and
$\E$ the set of its edges. For the synchronization terms, we consider the function $h$
defined by $$\forall(i,j)\in \text{\textlbrackdbl} 1,n
\text{\textrbrackdbl}^2,\; h(X_i,X_j)=\left( \begin{array}{c}
     \alpha(x_i-x_j)+\beta\sqrt[3]{(x_i-x_j)^5}\\
\gamma(y_i-y_j)
     \end{array}\right)$$ with $\alpha \geq 1$, $\beta \geq 0$ and
$\gamma\geq Max\{0,-c\}$. The system of equations for the network of oscillators is then
\begin{equation}\label{eqnFNex1}
\left\{
\begin{array}{l}
\ds\dot{X}_1 = F_1(X_1) -\epsilon \sum_{(1,j)\in
\E}h(X_1,X_j),\\
\phantom{\dot{x}_1 ~\,}\vdots\\
\ds \dot{X}_n = F_n(X_n) -\epsilon \sum_{(n,j) \in
\E}h(X_n,X_j).\\
\end{array}\right. 
\end{equation}

\noindent The three hypothesis of Section~\ref{sec:hyp} are
satisfied with $a_1=1$ and $a_2=1/b$. Indeed,
\begin{enumerate}
 \item assumption~(\ref{SD}) is obvious;
 \item the fact that the application $
\varphi$ corresponding to $h$, explicitly defined by $$\varphi(X_i,X_j) =
\alpha(x_i-x_j)^2+\beta\sqrt[3]{(x_i-x_j)^8}+\gamma/b(y_i-y_j)^2,$$ is a
pseudometric satisfying $\rho(m)=m^{5/3}$ is a consequence of Example~\ref{expseudo} and
Proposition~\ref{prop_pseudom}. Therefore,
assumption~(\ref{not_varphi}) is satisfied;
 \item the following inequalities shows assumption~(\ref{hyp0}), for all $(X_i,X_j)\in
D$, 

 \begin{small}
$\begin{array}{l}
\sum_{k=1}^2 a_k(X_i^k-X_j^k)\left(F_i^k(X_i)-F_j^k(X_j)\right)\hfill~\hfill~ \\
~\hspace{0.21\linewidth}=\left( \begin{array}{c}
             x_i-x_j \\
     y_i-y_j
            \end{array}\right).\left( \begin{array}{c}
             -(x_i^3-x_j^3)+(x_i-x_j) -(y_i-y_j) \\
     (x_i-x_j)-c/b(y_i-y_j) 
            \end{array}\right)\\

~\hspace{0.21\linewidth}= -(x_i-x_j)(x_i^3-x_j^3)+(x_i-x_j)^2 
-c/b(y_i-y_j)^2 \\

~\hspace{0.21\linewidth}\leq
\varphi(X_i,X_j)\,.  
\end{array}$
\end{small}
\end{enumerate}

For any connected graph $G$ with
$n$ vertex, inequality~(\ref{borne}) is verified for the bound of $C(G)$ given by 
$C=\dfrac{n(n-1)}{2}\delta(G)\,\rho(\delta(G))$. Theorem~\ref{theo} shows then that, for
any connected graph $G$ with $n$
vertex, if $\epsilon > \dfrac{(n-1)\,\delta(G)^{8/3}}{4}$ then system~(\ref{eqnFNex1})
synchronizes.

\subsection{Local synchronization of a network of oscillators}

In this section, we apply Theorem~\ref{theo2} to a network of Chua
oscillators. We consider the simplified version suggested by Chua for these oscillators
(see~\cite{matsumoto1984chaotic}): if we set $X =(x,y,z)^T$, the state equation for a
single oscillator is
given by
$\dot{X}= F(X)$ where 
$$F(x,y,z) = \left( \begin{array}{c}a [y-x-f(x)]  \\
         x-y+z   \\
    -b y -cz
            \end{array}\right),\,$$ $a>0$, $b>0$, $c>0$ and $f$ is a piece-wise function
$f(x) = dx + 1/2 (d-e)(|x + 1|-|x-1|)$ with $2d<e$.\\

Since $f$ is a piece-wise function, a real $\delta \geq 0$ bounds the set of
slopes
$\left\{\frac{f(x)-f(y)}{x-y}\,\mid\, 0<|x-y|\leq 1 \right\}$. In the
following, we suppose that:
\begin{enumerate}
 \item the set of
vertex of $G$ is $\E=\{(1;2),(1;3),\, \ldots , \,(1;n)\}$. In other words, we consider a
star configuration of oscillators;
 \item the synchronization function $h$ is given
by $$h((x_i,y_i,z_i),(x_j,y_j,z_j))=\left( \begin{array}{c}
                                            a\delta(x_i-x_j)e^{1-|x_i-x_j|}\\0\\0
                                           \end{array}
\right)\;.$$
\end{enumerate}

\noindent The equation for the $i$-th oscillator of the network is then $$\left(
\begin{array}{l}
\dot{x_i}\\\dot{y_i}\\\dot{z_i}
\end{array}\right) = \left( \begin{array}{c}a [y_i-x_i-f(x_i)]  \\
         x_i-y_i+z_i   \\
    -b y_i-cz_i
            \end{array}\right)+\epsilon \sum_{j \,\mid \, (i,j) \in \E } \left(
\begin{array}{c}a\delta(x_i-x_j)e^{1-|x_i-x_j|}\\0\\0 \end{array}\right) \,.$$
Assumptions of Section~\ref{sec:hyp} have to be verified in order to apply
Theorem~\ref{theo2}. The first one is obvious. For the second and the third one, let's
set $a_1=1/a$, $a_2=1 $ and $a_3=1/b$.

 Let's consider a closed ball $\B=\left\{X
\in\R^{\frac{n(n-1)}{2}d} \mid \|X\|_V\leq {(\sqrt{2}-1)}{\sqrt{a}}\right\}$ where
$\|.\|_V $ is defined by~(\ref{defnv}) and
the norm
$\|.\|_{\tilde{V}}$ given by $$\begin{array}{lccc}
 \|.\|_{\tilde{V}}:& \R^d &\rightarrow &\R^+\\
&Y&\rightarrow&\sqrt{\frac{1}{2}Y^T{H}Y}
\end{array}$$ where $H $ is the diagonal matrix $Diag(a_1,\ldots,a_d)$.  If we have
$\Delta \in \B $ then $\|\Delta_{i,j}\|_{\tilde{V}} < 
{(\sqrt{2}-1)}{\sqrt{a}}$. This implies that $\mid x_i-x_j\mid < 2 - \sqrt{2}$ and,
according to Example~\ref{expseudo},
the application $\varphi$ corresponding to $h$ satisfies assumption~(\ref{not_varphi}). \\

Let's verify assumption~(\ref{hyp0}). We have
 \begin{small}

\noindent
$\begin{array}{l}
\sum_{k=1}^3 a_k(X_i^k-X_j^k)\left(F_i^k(X_i)-F_j^k(X_j)\right)\hfill~ \\
~\hspace{0.09\linewidth}=\left( \begin{array}{c}
            \dfrac{x_i-x_j}{ a} \\
     y_i-y_j \\
\dfrac{z_i-z_j}{b}
            \end{array}\right). \left( \begin{array}{c}a
[(y_i-y_j)-(x_i-x_j)-(f(x_i)-f(x_j))] \\
         (x_i-x_j)-(y_i-y_j)+(z_i-z_j)   \\
    -b (y_i-y_j)-c(z_i-z_j)
            \end{array}\right)\\

~\hspace{0.09\linewidth}= (x_i-x_j)(f(x_i)-f(x_j))-(x_i-x_j)^2
-(y_i-y_j)^2-c/b(z_i-z_j)^2\,.\\
\end{array}$
\end{small}

\noindent By definition of $\delta$, we
have $\begin{array}{l}(x_i-x_j)(f(x_i)-f(x_j))\leq\delta (x_i-x_j)^2
e^{1-|x_i-x_j|}\,.\end{array}$
This shows inequality~(\ref{hyp0}).\\

Moreover, if $\varphi(x_i,x_j)=0$ and $\sum_{k=1}^3
a_k(X_i^k-X_j^k)\left(F_i^k(X_i)-F_j^k(X_j)\right)= 0$ then we have $X_i=X_j$.
Consequently, assumption~(\ref{SD}) holds.\\

Since the induced pseudometric $\varphi$ satisfies $\forall m \in \N^*,\; \rho(m)=m$ (see
Example~\ref{expseudo}), the bound $C_G$ is given explicitly by $2n-3$ (See
Remark~\ref{rem_cg} and~\cite{belykh2004connection}).

Theorem~\ref{theo2} can now be applied : if $\Delta(t_0)
\in\; \stackrel{\circ}{\B}$ for an instant $t_0$ and if $\epsilon > \dfrac{
2n-3}{2n}$ then system~(\ref{eqn0}) synchronizes.
\pagebreak

\section{Conclusion}
In this paper, sufficient conditions for proving complete synchronization of oscillators
in a connected undirected network are presented. The contribution of this paper lies in
the extension of results established in the case of linear synchronization to the non
linear case. For this, we have introduced pseudometrics which enable us to link graph
topology and minimal synchronization strength between oscillators. Under our assumptions,
a criterion proving the existence of trajectories is given. Two results for proving the
complete synchronization are then proposed: the
first one gives a global criterion and the
second one deals with local synchronization, that is when the
trajectories lie in a neighborhood of the synchronization variety.
To illustrate these results, two applications are treated.

\end{document}